\title[]{Inviscid limit for SQG in bounded domains}
\author{Peter Constantin}
\address{Department of Mathematics, Princeton University, Princeton, NJ 08544}
\email{const@math.princeton.edu}
\author{Mihaela Ignatova}
\address{Department of Mathematics, Princeton University, Princeton, NJ 08544}
\email{ignatova@math.princeton.edu}
\author{Huy Q. Nguyen}
\address{Department of Mathematics, Princeton University, Princeton, NJ 08544}
\email{qn@math.princeton.edu}
\newcommand{\bq}{\begin{equation}}
\newcommand{\eq}{\end{equation}}
\newcommand{\bqa}{\begin{eqnarray*}}
\newcommand{\eqa}{\end{eqnarray*}}
\newcommand{\Rr}{\mathbb{R}}
\newcommand{\la}{\label}
\newcommand{\na}{\nabla}
\newcommand{\be}{\begin{equation}}
\newcommand{\ee}{\end{equation}}
\newcommand{\ba}{\begin{array}{l}}
\newcommand{\ea}{\end{array}}
\theoremstyle{plain}
\newtheorem{theo}{Theorem}[section]
\newtheorem{lemm}[theo]{Lemma}
\newtheorem{coro}[theo]{Corollary}
\theoremstyle{definition}
\newtheorem{rema}[theo]{Remark}
\DeclareMathOperator{\cnx}{div}
\DeclareSymbolFont{pletters}{OT1}{cmr}{m}{sl}
\DeclareMathSymbol{s}{\mathalpha}{pletters}{`s}
\def\tt{\theta}
\def\eps{\varepsilon}
\def\na{\nabla}
\def\la{\left\lvert}
\def\le{\leq}
\def\L#1{\langle #1 \rangle}
\def\mez{\frac{1}{2}}
\def\ra{\right\rvert}
\def\P{\mathbb P}
\def\L{\Lambda}
\def\p{\partial}
\def\wc{\rightharpoonup}
\numberwithin{equation}{section}
\date{today}
\begin{document}
\begin{abstract}
We prove that the limit of any weakly convergent sequence of Leray-Hopf solutions of dissipative SQG equations is a weak solution of the inviscid SQG equation in bounded domains.  
\end{abstract}

\keywords{}

\noindent\thanks{\em{ MSC Classification:  35Q35, 35Q86.}}

\maketitle
\section{Introduction}
The behavior of high Reynolds number fluids is a broad, important and mostly open problem of nonlinear physics and of PDE. Here we consider a model problem, 
the surface quasi-geostrophic equation, and the limit of its viscous regularizations of certain types. We prove that the inviscid limit is rigid, and no anomalies arise in the limit.  

Let $\Omega\subset \Rr^2$ be a bounded domain with smooth boundary. Denote 
\[
\L=\sqrt{-\Delta}
\]
where $-\Delta$ is the Laplacian operator with Dirichlet boundary conditions. The dissipative  surface quasigeostrophic (SQG) equation  in $\Omega$ is the equation
\bq\label{SQG}
\partial_t\tt^\nu  +u^\nu\cdot \nabla\tt^\nu +\nu \Lambda^s\theta^\nu=0, \quad\nu>0,~s\in (0, 2], 
\eq
where $\tt^\nu = \tt^\nu(x,t)$, $u^\nu = u^\nu(x,t)$ with $(x, t)\in \Omega\times [0, \infty)$ and with the velocity $u^\nu$ given by
\bq\label{u:defi}
u^\nu= R_D^\perp \tt^\nu :=\nabla^\perp \L^{-1}\tt^\nu,\quad\nabla^\perp=(-\p_2, \p_1).
\eq
We refer to the parameter $\nu$ as  ``viscosity''. Fractional powers of the Laplacian $-\Delta$ are based on eigenfunction expansions. The inviscid SQG equation has zero viscosity
\bq\label{iSQG}
\partial_t\tt  +u\cdot \nabla\tt =0,\quad u=R_D^\perp \theta.
\eq
The dissipative  SQG \eqref{SQG} has global weak solutions for any $L^2$ initial data:
\begin{theo}\label{globalweak}
For any initial data $\theta_0\in L^2(\Omega)$ there exists a global weak solution $\theta$
\[
\theta\in C_w(0, \infty; L^2(\Omega))\cap L^2(0, \infty; D(\L^{\frac s2}))
\]
 to the dissipative  SQG equation \eqref{SQG}. More precisely,  $\theta$ satisfies the weak formulation
 \bq\label{weakform:dissipative }
 \int_0^\infty\int_\Omega \theta \varphi(x)dx \p_t \phi(t)dt+\int_0^\infty\int_\Omega u\theta \cdot \nabla\varphi(x)dx \phi(t) dt-\nu\int_0^\infty\int_\Omega \L^{\frac s2}\tt \L^{\frac s2}\varphi(x)dx \phi(t)dt=0
 \eq
 for any $\phi\in C_c^\infty((0, \infty))$ and $\varphi\in D(\L^2)$. Moreover, $\tt$ obeys the energy inequality
 \bq\label{energyineq}
\mez\Vert \tt(\cdot, t)\Vert^2_{L^2(\Omega)}+\nu\int_0^t\int_\Omega |\L^{\frac s2}\theta|^2dxdr\le \mez \Vert \tt_0\Vert^2_{L^2(\Omega)}
 \eq
and the  balance 
 \bq\label{Ham:viscous}
\mez\Vert \tt(\cdot, t)\Vert^2_{D(\L^{-\mez})}+\nu\int_0^t\int_\Omega |\L^{\frac {s-1}2}\theta|^2dxdr=\mez\Vert \tt_0\Vert^2_{D(\L^{-\mez})}
\eq
  for a.e. $t>0$. In addition, $\tt\in C([0, \infty); D(\L^{-\eps}))$ for any $\eps>0$ and the initial data $\tt_0$ is attained in $D(\L^{-\eps})$.
\end{theo}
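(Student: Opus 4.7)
My plan is to construct a weak solution via the Faedo--Galerkin method adapted to the Dirichlet Laplacian. Let $(w_k)_{k\ge 1}$ be the $L^2(\Omega)$-orthonormal basis of Dirichlet eigenfunctions of $-\D$ with eigenvalues $\ld_k^2$, and let $P_N$ be the $L^2$-projector onto $V_N=\mathrm{span}(w_1,\dots,w_N)$. Because $\L^\alpha w_k=\ld_k^\alpha w_k$ for every $\alpha\in\Rr$, $P_N$ commutes with every power of $\L$, and I consider the ODE on $V_N$
\[
\p_t\tt_N+P_N(u_N\cdot\na\tt_N)+\nu\L^s\tt_N=0,\qquad u_N=\na^\perp\L^{-1}\tt_N,\qquad \tt_N(0)=P_N\tt_0,
\]
whose right-hand side depends smoothly on $\tt_N\in V_N$, so a unique local-in-time solution exists.

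Two \emph{a priori} identities drive the analysis. Testing with $\tt_N$, and using that $u_N=\na^\perp\psi_N$ with $\psi_N\defn\L^{-1}\tt_N$ vanishing on $\p\Omega$ so that $u_N$ is divergence-free and tangent to $\p\Omega$, I obtain
\[
\mez\|\tt_N(t)\|_{L^2}^2+\nu\int_0^t\|\L^{s/2}\tt_N\|_{L^2}^2\,dr=\mez\|P_N\tt_0\|_{L^2}^2,
\]
which in particular provides global existence for the ODE. Testing instead with $\L^{-1}\tt_N\in V_N$, using $[P_N,\L^{-1}]=0$ together with the cancellation
\[
\int_\Omega (u_N\cdot\na\tt_N)\,\psi_N\,dx=-\int_\Omega \na^\perp\psi_N\cdot\na\psi_N\,\tt_N\,dx=0
\]
(from the pointwise orthogonality $\na^\perp\psi_N\cdot\na\psi_N\equiv 0$; the boundary integral vanishes because $\psi_N|_{\p\Omega}=0$), produces the Hamiltonian equality
\[
\mez\|\tt_N(t)\|_{D(\L^{-1/2})}^2+\nu\int_0^t\|\L^{(s-1)/2}\tt_N\|_{L^2}^2\,dr=\mez\|P_N\tt_0\|_{D(\L^{-1/2})}^2.
\]

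These identities bound $\tt_N$ uniformly in $L^\infty_tL^2_x\cap L^2_tD(\L^{s/2})$. Since $R_D^\perp=\na^\perp\L^{-1}$ is bounded on $L^2(\Omega)$, the flux $u_N\tt_N$ lies in $L^\infty_tL^1_x$ and, by Sobolev embedding on $D(\L^{s/2})$, in $L^2_tL^p_x$ for some $p>1$. The equation then controls $\p_t\tt_N$ in $L^2(0,T;H^{-M}(\Omega))$ for $M$ large enough, and Aubin--Lions delivers a subsequence converging to a limit $\tt$ strongly in $C([0,T];D(\L^{-\eps}))$ for every $\eps>0$; the compactness of the embedding $D(\L^{s/2})\hookrightarrow D(\L^{(s-1)/2})$ upgrades this to strong convergence in $L^2(0,T;D(\L^{(s-1)/2}))$. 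This is enough to pass to the limit in each term of \eqref{weakform:dissipative } and in the Hamiltonian identity as an equality, proving \eqref{Ham:viscous}; the energy inequality \eqref{energyineq} then follows by weak lower semicontinuity of $\|\cdot\|_{L^2}$ and $\|\L^{s/2}\cdot\|_{L^2}$; and $\tt_0$ is attained in $D(\L^{-\eps})$ from the uniform time-regularity combined with the $L^\infty L^2$ bound.

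The main obstacle I expect is the rigorous justification of the operator-theoretic ingredients in the bounded-domain Dirichlet setting, where the fractional Laplacian is not a Fourier multiplier: the $L^2$ boundedness of $R_D^\perp$, the commutation $[P_N,\L^\alpha]=0$, the boundary-free integration by parts underlying the cubic cancellation (which relies on $\psi_N|_{\p\Omega}=0$), and the Rellich-type compactness of the fractional embeddings. These are all consequences of the spectral calculus for the Dirichlet Laplacian and of the regularity theory for the Dirichlet problem, but each must be verified carefully since the heuristics from the whole-space or torus setting do not apply verbatim.
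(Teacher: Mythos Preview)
Your proposal is correct and follows essentially the same Galerkin scheme as the paper, including the two a priori identities, the Aubin--Lions compactness via a bound on $\partial_t\theta_N$, and the passage to the limit; your justification of the Hamiltonian cancellation via the pointwise identity $\nabla^\perp\psi_N\cdot\nabla\psi_N\equiv 0$ is equivalent to the paper's use of the antisymmetry $\gamma^{(m)}_{jkl}\lambda_l^{-1/2}=-\gamma^{(m)}_{lkj}\lambda_j^{-1/2}$.

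One technical point you gloss over and that the paper handles explicitly: after integrating the nonlinearity by parts against a test function $\varphi$, you must control $\|\nabla P_N\varphi\|_{L^\infty}$ \emph{uniformly in $N$}, both to bound $\partial_t\theta_N$ and, later, to remove the projector from the test function in the limit. Since $P_N$ is a spectral (not Fourier) truncation, its boundedness on $H^k(\Omega)$ is not automatic; the paper proves a dedicated lemma (via Weyl's law and elliptic regularity of eigenfunctions) showing $\|P_m\varphi\|_{H^k}\le C_{N,k}\|\varphi\|_{D(\Lambda^{2N})}$ for $N>\tfrac{k}{2}+\tfrac{d}{2}$ and $\|(\mathbb I-P_m)\varphi\|_{H^k}\to 0$. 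Accordingly the paper places $\partial_t\theta_m$ in $L^\infty(0,T;D(\Lambda^{-6}))$ rather than $H^{-M}$. This is not among the obstacles you list, but it is the one genuinely domain-specific ingredient in the limit passage.
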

We refer to any weak solutions of \eqref{SQG} satisfying the properties \eqref{weakform:dissipative }, \eqref{energyineq}, \eqref{Ham:viscous} as a ``Leray-Hopf weak solution''.
\begin{rema}
Theorem \ref{globalweak}  for critical dissipative SQG $s=1$ was obtained in \cite{ConIgn}.
\end{rema}
\begin{rema}
Note that $C_c^\infty(\Omega)$ is not dense in $D(\L^2)$ since the $D(\L^2)$ norm is equivalent to the $H^2(\Omega)$ norm and $C_c^\infty(\Omega)$ is dense in $H^2_0(\Omega)$ which is strictly contained in $D(\L^2)$. %The weak formulation \eqref{weakform:dissipative } is thus satisfied for a class of test functions strictly larger than  $C_c^\infty(\Omega)$. 
\end{rema}
The existence of $L^2$ global weak solutions for inviscid SQG \eqref{iSQG}  was proved in \cite{ConNgu}. More precisely, (see Theorem 1.1, \cite{ConNgu}) for any initial data $\tt_0\in L^2(\Omega)$  there exists a global weak solution $\tt\in C_w(0, \infty; L^2(\Omega))$ satisfying
\bq\label{weak:iSQG}
 \int_0^\infty\int_\Omega \theta \p_t\varphi dx dt+\int_0^\infty\int_\Omega u\theta \cdot \nabla\varphi dxdt=0\quad\forall \varphi\in C_c^\infty(\Omega\times (0, \infty)),
 \eq
 and such that the Hamiltonian 
 \bq\label{Ham}
 H(t):=\| \tt(t)\|_{D(\L^{-\mez})}^2
 \eq
  is constant in time. Moreover, the initial data is attained in $D(\L^{-\eps})$ for any $\eps>0$.
 
 Our main result in this note establishes the convergence of weak solutions of the dissipative  SQG to weak solutions of the inviscid SQG in the inviscid limit $\nu\to 0$.
\begin{theo}\label{main}
Let $\{\nu_n\}$ be a sequence of viscosities converging to $0$  and let $\{\theta^{\nu_n}_0\}$ be a bounded sequence in $L^2(\Omega)$. Any  weak limit $\tt$ in $L^2(0, T; L^2(\Omega))$, $T>0$, of any subsequence of $\{\theta^{\nu_n}\}$ of Leray-Hopf weak solutions of the dissipative  SQG equation \eqref{SQG} with viscosity $\nu_n$ and initial data $\theta_0^{\nu_n}$ is a weak solution of the inviscid SQG equation \eqref{iSQG} on $[0, T]$. Moreover, $\tt\in C(0, T; D(\L^{-\eps}))$ for any $\eps>0$, and when $s\in (0, 1]$ the Hamiltonian of $\tt$ is constant on $[0, T]$. 
\end{theo}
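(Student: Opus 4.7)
The plan is to combine the Leray--Hopf a priori bounds~\eqref{energyineq} with an Aubin--Lions compactness argument upgrading the hypothesized weak $L^2_{t,x}$ convergence to strong convergence in $C([0,T]; D(\Lambda^{-\eps}))$, then to pass to the limit term by term in~\eqref{weakform:dissipative }: the linear and viscous terms are handled by standard arguments, while the nonlinearity is treated via a symmetrization exploiting the adjoint structure of $R_D^\perp=\nabla^\perp\Lambda^{-1}$.

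\emph{Uniform bounds and compactness.} From~\eqref{energyineq}, $\{\tt^{\nu_n}\}$ is uniformly bounded in $L^\infty(0,T;L^2(\Omega))$, $\{\sqrt{\nu_n}\Lambda^{s/2}\tt^{\nu_n}\}$ in $L^2_{t,x}$, and $\{u^{\nu_n}\}$ in $L^\infty(0,T;L^2(\Omega))$ since $R_D^\perp$ is bounded on $L^2(\Omega)$. Writing~\eqref{SQG} as
\[
\p_t\tt^{\nu_n}=-\cn(u^{\nu_n}\tt^{\nu_n})-\sqrt{\nu_n}\,\Lambda^{s/2}\bigl(\sqrt{\nu_n}\Lambda^{s/2}\tt^{\nu_n}\bigr)
\]
displays the time derivative as a bounded sum in $L^2(0,T; D(\Lambda^{-N}))$ for some fixed large $N$. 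The Aubin--Lions--Simon lemma then extracts a subsequence with $\tt^{\nu_n}\to\tt$ strongly in $C([0,T]; D(\Lambda^{-\eps}))$ for every $\eps>0$, which gives the claimed time-continuity.

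\emph{Passing to the limit in~\eqref{weak:iSQG}.} Using test functions of the form $\phi(t)\varphi(x)$ with $\phi\in C_c^\infty((0,T))$ and $\varphi\in C_c^\infty(\Omega)\subset D(\Lambda^2)$: the $\p_t$-term passes to the limit by weak $L^2_{t,x}$ convergence, and the viscous term is $\mathcal{O}(\sqrt{\nu_n})$ by Cauchy--Schwarz and the energy bound. For the nonlinearity, the adjoint identity $(R_D^\perp)^*\vec v=-\Lambda^{-1}\mathrm{curl}\,\vec v$ combined with the elementary identity $\mathrm{curl}(\tt\nabla\varphi)=-\cn(\tt\nabla^\perp\varphi)$ gives
\[
\int_0^T\!\!\int_\Omega u^{\nu_n}\tt^{\nu_n}\cdot\nabla\varphi\,\phi\,dx\,dt
=\int_0^T\!\!\int_\Omega \tt^{\nu_n}\,\Lambda^{-1}\cn\bigl(\tt^{\nu_n}\nabla^\perp\varphi\bigr)\,\phi\,dx\,dt.
\]
A compensated-compactness argument along the lines of~\cite{ConNgu}, exploiting the strong $D(\Lambda^{-\eps})$ convergence of $\tt^{\nu_n}$ together with the uniform $L^\infty_tL^2_x$ bound, then shows that this bilinear form is sequentially continuous along our subsequence and converges to the analogous expression with $\tt$ in place of $\tt^{\nu_n}$; density of tensor-product test functions yields~\eqref{weak:iSQG}.

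\emph{Hamiltonian conservation for $s\in(0,1]$.} The exponent $(s-1)/2\le 0$ makes $\Lambda^{(s-1)/2}:L^2(\Omega)\to L^2(\Omega)$ bounded, so the dissipation term in~\eqref{Ham:viscous} is $\mathcal{O}(\nu_n)\to 0$. Interpolating the strong $D(\Lambda^{-\eps})$ convergence (for $\eps>\mez$) with the uniform $L^\infty_tL^2_x$ bound via
\[
\|\tt^{\nu_n}(t)-\tt(t)\|_{D(\Lambda^{-\mez})}\le C\|\tt^{\nu_n}(t)-\tt(t)\|_{D(\Lambda^{-\eps})}^{1/(2\eps)}\|\tt^{\nu_n}(t)-\tt(t)\|_{L^2}^{1-1/(2\eps)}
\]
upgrades the convergence to strong in $C([0,T]; D(\Lambda^{-\mez}))$, and passing to the limit in~\eqref{Ham:viscous} yields constancy of $\|\tt(\cdot,t)\|^2_{D(\Lambda^{-\mez})}$ on $[0,T]$. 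The main obstacle is the nonlinear term: in a bounded domain, Riesz-type operators built from the Dirichlet Laplacian do not commute with $\Lambda^\alpha$ nor with multiplication by smooth cutoffs, so strong $L^2$ compactness of $\{\tt^{\nu_n}\}$ is unavailable; the adjoint rewriting is precisely what shifts the singular derivatives onto the smooth test function and brings the problem within reach of the compensated-compactness machinery developed in~\cite{ConNgu} for the existence theory of inviscid SQG in bounded domains.
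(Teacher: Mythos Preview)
Your overall architecture---uniform $L^\infty_tL^2_x$ bounds from~\eqref{energyineq}, an $L^\infty_t D(\Lambda^{-N})$ bound on $\partial_t\theta^{\nu_n}$, Aubin--Lions to get strong convergence in $C([0,T];D(\Lambda^{-\eps}))$, and passage to the limit in the linear and viscous terms---matches the paper's proof and is correct. The treatment of the Hamiltonian is also fine (though unnecessarily elaborate: since the strong convergence holds for every $\eps>0$, taking any $\eps\in(0,\tfrac12)$ gives $C([0,T];D(\Lambda^{-\eps}))\subset C([0,T];D(\Lambda^{-1/2}))$ directly, without interpolation).

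The genuine gap is the nonlinear term. Your adjoint rewriting
\[
\int_\Omega u^{\nu_n}\theta^{\nu_n}\cdot\nabla\varphi\,dx
=\int_\Omega \theta^{\nu_n}\,\Lambda^{-1}\cn\bigl(\theta^{\nu_n}\nabla^\perp\varphi\bigr)\,dx
\]
is algebraically correct, but it does not ``shift the singular derivatives onto the smooth test function'': the right-hand side is still a product of two sequences converging only weakly in $L^2_{t,x}$ (namely $\theta^{\nu_n}$ and $\Lambda^{-1}\cn(\theta^{\nu_n}\nabla^\perp\varphi)$, the latter because $\Lambda^{-1}\cn$ is merely an order-zero operator). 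No compactness has been gained, and invoking ``compensated-compactness along the lines of~\cite{ConNgu}'' is not a proof---that machinery is precisely what you have not reproduced.

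What the paper actually does is pass to the stream function $\psi_n=\Lambda^{-1}\theta^{\nu_n}$, which converges \emph{strongly} in $C([0,T];L^2)$ (a consequence of the $D(\Lambda^{-\eps})$ convergence of $\theta^{\nu_n}$), and then uses the commutator identity (Lemma~\ref{commu:key})
\[
\int_\Omega \Lambda\psi_n\,\nabla^\perp\psi_n\cdot\nabla\varphi\,dx
=\tfrac12\int_\Omega [\Lambda,\nabla^\perp]\psi_n\cdot\nabla\varphi\,\psi_n\,dx
-\tfrac12\int_\Omega \nabla^\perp\psi_n\cdot[\Lambda,\nabla\varphi]\psi_n\,dx.
\]
The point is that both commutators on the right are of order \emph{zero}: Theorem~\ref{Commutator:CN} gives $\nabla\varphi\,[\Lambda,\nabla^\perp]:L^2\to L^2$ (because $\varphi\in C_c^\infty(\Omega)$ absorbs the boundary singularity $d(x)^{-2-d/p}$), and Theorem~\ref{Commutator:CI} gives $[\Lambda,\nabla\varphi]:D(\Lambda^{1/2})\to D(\Lambda^{1/2})$. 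This is the actual compensation: each bilinear term now pairs a weakly convergent factor with one that converges strongly (in $L^2$ or in $D(\Lambda^{1/2})$, by interpolation between strong $L^2$ and bounded $D(\Lambda)$), so the limit passes. Your adjoint rewriting does not produce this cancellation, and without it the argument is incomplete.
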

\begin{rema}
The same result holds true on the torus $\mathbb{T}^2$. The case of the whole space $\Rr^2$ was treated in \cite{Ber}.
\end{rema}
\begin{rema}
With more singular constitutive laws  $u=\nabla^\perp \L^{-\alpha}\tt$, $\alpha\in [0, 1)$, $L^2$ global weak solutions of the inviscid equations were obtained in \cite{cccgw, HN}. Theorem \ref{main} could be extended to this case. It is also possible to consider $L^p$ initial data in light of the work \cite{Marchand}.
\end{rema}
As a corollary of the proof of Theorem \ref{main} we have the following weak rigidity of inviscid SQG in bounded domains:
\begin{coro}
Any weak limit in $L^2(0, T; L^2(\Omega))$, $T>0$, of any sequence of weak solutions of the inviscid SQG equation \eqref{iSQG} is a weak solution of \eqref{iSQG}. Here, weak solutions of \eqref{iSQG} are interpreted in the sense of \eqref{weak:iSQG}. 
\end{coro}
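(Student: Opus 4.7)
The plan is to run the same argument as the proof of Theorem \ref{main}, but omitting the viscous term. The only genuine task is to establish enough compactness to pass to the limit in the quadratic term $u^n\tt^n$, where $u^n=R_D^\perp\tt^n$ and $\{\tt^n\}$ is a sequence of weak solutions of \eqref{iSQG} with $\tt^n\wc \tt$ in $L^2(0,T;L^2(\Omega))$.

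First, I would extract the basic bounds. From the weak convergence in $L^2(0,T;L^2(\Omega))$ the sequence $\{\tt^n\}$ is bounded in that space, and since $R_D^\perp=\na^\perp\L^{-1}$ is bounded on $L^2(\Omega)$, so is $\{u^n\}$. Therefore $\{u^n\tt^n\}$ is bounded in $L^1(0,T;L^1(\Omega))$, and testing against $\na\varphi$ with $\varphi\in C_c^\infty(\Omega)$ in \eqref{weak:iSQG} yields $\p_t\tt^n$ bounded in $L^1(0,T;W^{-1,p}(\Omega))$ for some $p>1$.

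Next I would invoke an Aubin--Lions type compactness argument: using the compact embedding $L^2(\Omega)\hookrightarrow\hookrightarrow D(\L^{-\eps})$ and the equation-based $\p_t$ bound, one can pass to a subsequence converging strongly in $C([0,T];D(\L^{-\eps}))$ for any $\eps>0$. This handles the linear term in \eqref{weak:iSQG} trivially (weak $L^2$ convergence is enough) and attaches meaning to the trace at $t=0$ and later times.

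Finally I would pass to the limit in the nonlinear term by symmetrizing
\[
\int_0^T\!\!\int_\Omega R_D^\perp\tt^n\,\tt^n\cdot\na\varphi\,dx\,dt
\]
using the skew-adjointness of $R_D^\perp$ to express it as a bilinear form with a smoothing commutator kernel, exactly as in the proof of Theorem \ref{main} and already developed in \cite{ConNgu} for the Dirichlet Riesz transform. The resulting expression is continuous with respect to one strong (in $D(\L^{-\eps})$) and one weak (in $L^2$) factor, so the limit identifies with $\int\!\!\int u\tt\cdot\na\varphi\,dx\,dt$.

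\textbf{Main obstacle.} The only real difficulty is passing to the limit in the quadratic term, since neither factor converges strongly in $L^2$. Unlike in Theorem \ref{main}, there is no viscous dissipation available to provide extra regularity; the whole burden falls on the commutator/symmetry structure of $R_D^\perp$ on the bounded domain, and this is precisely the technical ingredient already isolated in \cite{ConNgu} and reused in the proof of Theorem \ref{main}.
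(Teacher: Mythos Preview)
Your approach is correct and matches the paper's: the Corollary is obtained by rerunning the proof of Theorem \ref{main} with $\nu=0$, using Lemma \ref{commu:key} and the commutator estimates of Theorems \ref{Commutator:CI} and \ref{Commutator:CN} to pass to the limit in the nonlinear term.

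Two small corrections to the details. First, from $u^n\tt^n\in L^1(0,T;L^1(\Omega))$ you do \emph{not} get $\p_t\tt^n\in L^1(0,T;W^{-1,p}(\Omega))$ for any $p>1$; the pairing only controls $\|\na\varphi\|_{L^\infty}$, so the correct target is $L^1(0,T;H^{-3}(\Omega))$ (via $H^3_0\hookrightarrow W^{1,\infty}$ in $d=2$), exactly as in the paper's proof. Second, since here you only have $\tt^n$ bounded in $L^2(0,T;L^2(\Omega))$ (not $L^\infty_t$ as in the Leray--Hopf case), Aubin--Lions yields strong convergence of $\tt^n$ in $L^2(0,T;D(\L^{-\eps}))$, not in $C([0,T];D(\L^{-\eps}))$. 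This is still enough: the commutator argument only needs $\psi_n\to\psi$ strongly in $L^2(0,T;L^2(\Omega))$ together with the uniform $L^2(0,T;H^1_0(\Omega))$ bound on $\psi_n$, both of which follow.
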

\begin{rema}
On tori, this result was proved in \cite{IV}. If the weak limit occurs in $L^\infty(0, T; L^2(\Omega))$ and the sequence of weak solutions conserves the Hamiltonian then so is the limiting weak solution.
\end{rema}
The paper is organized as follows. Section \ref{prelim} is devoted to basic facts about the spectral fractional Laplacian and results on commutator estimate. The proofs of Theorems \ref{globalweak} and \ref{main} are given respectively in sections \ref{proof:globalweak} and \ref{proof:main}. Finally, an auxiliary lemma is given in Appendix \ref{app}.
%%%%%%%%%%
\section{Fractional Laplacian and commutators}\label{prelim}
Let $\Omega\subset \Rr^d$, $d\ge 2$, be a bounded domain  with smooth boundary. The Laplacian $-\Delta$ is defined on $D(-\Delta)=H^2(\Omega)\cap H^1_0(\Omega)$. Let $\{w_j\}_{j=1}^\infty$ be an orthonormal basis of $L^2(\Omega)$ comprised of $L^2-$normalized eigenfunctions $w_j$ of $-\Delta$, i.e.
\[
-\Delta w_j=\lambda_jw_j, \quad \int_{\Omega}w_j^2dx = 1,
\]
with $0<\lambda_1<\lambda_2\le...\le\lambda_j\to \infty$.\\
The fractional Laplacian is defined using eigenfunction expansions,
\[
\Lambda^{s}f\equiv (-\Delta)^{\frac{s}{2}} f:=\sum_{j=1}^\infty\lambda_j^{\frac{s}{2}} f_j w_j\quad\text{with}~f=\sum_{j=1}^\infty f_jw_j,\quad f_j=\int_{\Omega} fw_jdx
\]
for $s\ge 0$ and $f\in D(\Lambda^{s}):=\{f\in L^2(\Omega): \big(\lambda_j^{\frac{s}{2}} f_j\big)\in \ell^2(\mathbb N)\}$. The norm of $f$ in $D(\Lambda^{s })$ is defined by
\[
\Vert f\Vert_{D(\L^s)}:=\|(\lambda_j^{\frac s2}f_j)\|_{\ell^2(\mathbb{N})}.
\]
It is also well-known that $D(\Lambda)$ and $H^1_0(\Omega)$ are isometric. 
In the language of interpolation theory, 
\[
D(\Lambda^\alpha)=[L^2(\Omega), D(-\Delta)]_{\frac \alpha 2}\quad\forall \alpha\in [0, 2].
\]
As mentioned above,
\[
H^1_0(\Omega)= D(\Lambda)=[L^2(\Omega), D(-\Delta)]_{\mez},
\]
hence
\[
D(\Lambda^\alpha)=[L^2(\Omega), H^1_0(\Omega)]_{\alpha}\quad\forall \alpha\in [0, 1].
\]
Consequently, we can identify $D(\Lambda^\alpha)$ with usual Sobolev spaces (see Chapter 1, \cite{LioMag}):
\bq\label{identify}
D(\Lambda^\alpha)=
\begin{cases}
H^\alpha_0(\Omega) &\quad\text{if}~ \alpha\in (\mez, 1],\\
H^\mez_{00}(\Omega):=\{ u\in H^\mez_0(\Omega): u/\sqrt{d(x)}\in L^2(\Omega)\}&\quad\text{if}~ \alpha=\mez,\\
H^\alpha(\Omega) &\quad\text{if}~ \alpha\in [0, \mez).
\end{cases}
\eq
Here and below $d(x)$ denote the distance from $x$ to the boundary $\p\Omega$.

Next, for $s>0$ we define
\[
\L^{-s}f=\sum_{j=1}^\infty \lambda_j^{-\frac{s}{2}}f_jw_j
\]
 if $f=\sum_{j=1}^\infty f_jw_j\in D(\L^{-s})$ where
\[
D(\L^{-s}):=\left\{\sum_{j=1}^\infty f_jw_j\in \mathscr{D}'(\Omega): f_j\in \Rr,~\sum_{j=1}^\infty \lambda_j^{-\frac{s}{2}}f_jw_j\in L^2(\Omega)\right\}.
\]
The norm of $f$ is then defined by
\[
 \Vert f\Vert_{D(\L^{-s})}:=\Vert \L^{-s}f\Vert_{L^2(\Omega)}=\big(\sum_{j=1}^\infty\lambda_j^{-s} f_j^2\big)^\mez.
\]
It is easy to check that $D(\L^{-s})$ is the dual of $D(\L^s)$ with respect to the pivot space $L^2(\Omega)$.
\begin{lemm}[\protect{Lemma 2.1, \cite{HN}}] \label{lemm:inject}
The embedding 
\bq\label{inject}
D(\L^s)\subset H^s(\Omega)
\eq
is continuous for all $s\ge 0$.
\end{lemm}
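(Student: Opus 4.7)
The plan is to split the range of $s$ into $s\in [0,2]$, handled by interpolation, and $s>2$, handled by an elliptic regularity bootstrap. For $s\in [0,1]$ the embedding is immediate from the identification \eqref{identify} just recorded in the paper: in each of the three subcases for $\alpha$, the space $D(\Lambda^\alpha)$ is continuously contained in $H^\alpha(\Omega)$ (it is either equal to it, or a closed subspace of it).

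For $s\in [1,2]$, I would use the characterization $D(\Lambda^\alpha)=[L^2(\Omega), D(-\Delta)]_{\alpha/2}$ already recalled in the paper. Standard elliptic $H^2$-regularity for the Dirichlet Laplacian on a smooth bounded domain gives a continuous embedding $D(-\Delta)\hookrightarrow H^2(\Omega)$, so interpolating with the identity on $L^2(\Omega)$ yields
\[
D(\Lambda^s)=[L^2(\Omega), D(-\Delta)]_{s/2}\hookrightarrow [L^2(\Omega), H^2(\Omega)]_{s/2}=H^s(\Omega),
\]
the last equality being the Lions-Magenes interpolation identification of Sobolev spaces on smooth bounded domains.

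For $s>2$ I would argue by induction on the interval $[2k, 2k+2]$. Given $f\in D(\Lambda^s)$, a direct computation on the eigenfunction expansion shows that $g\defn -\Delta f=\Lambda^2 f$ lies in $D(\Lambda^{s-2})$ with $\Vert g\Vert_{D(\Lambda^{s-2})}=\Vert f\Vert_{D(\Lambda^s)}$. By the inductive hypothesis, $g\in H^{s-2}(\Omega)$ with continuous dependence, and since $f\in D(\Lambda)=H^1_0(\Omega)$, classical elliptic regularity for the Dirichlet problem on the smooth domain $\Omega$ upgrades this to $f\in H^s(\Omega)$ with a norm bounded by $\Vert g\Vert_{H^{s-2}(\Omega)}+\Vert f\Vert_{L^2(\Omega)}\le C\Vert f\Vert_{D(\Lambda^s)}$.

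The main delicacy lies not in the bootstrap but in the interpolation identity $[L^2(\Omega), H^2(\Omega)]_{s/2}=H^s(\Omega)$, particularly at the half-integer value $s=1/2$, and more broadly in the compatibility of the three families of spaces involved (fractional-power domains of $-\Delta$, real or complex interpolation spaces, and Sobolev spaces on $\Omega$) near the boundary. These identities are standard but non-trivial and rest on the Lions-Magenes theory already invoked implicitly in the paper; once they are in hand, the rest of the argument is mechanical.
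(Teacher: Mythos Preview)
The paper does not prove this lemma; it merely quotes it from \cite{HN}. Your argument is a standard and correct proof. The interpolation step for $s\in[0,2]$ is exactly what one would expect: the continuous inclusion $D(-\Delta)=H^2(\Omega)\cap H^1_0(\Omega)\hookrightarrow H^2(\Omega)$ together with functoriality of complex interpolation and the Lions--Magenes identification $[L^2(\Omega),H^2(\Omega)]_{s/2}=H^s(\Omega)$ give the result directly (your separate treatment of $s\in[0,1]$ via \eqref{identify} is fine but unnecessary, since the same interpolation argument covers it). The bootstrap for $s>2$ via $-\Delta f\in D(\Lambda^{s-2})$ and elliptic regularity is the natural way to extend, and your remark about the half-integer subtlety in the interpolation identity is well placed but does not affect the validity here.
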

\begin{lemm}\label{lemm:compact}
For $s, r\in \Rr$ with $s>r$, the embedding $D(\L^s)\subset D(\L^r)$ is compact.
\end{lemm}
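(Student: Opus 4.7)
The plan is to prove this by the standard ``eigenfunction compactness'' argument, using crucially that the eigenvalues $\lambda_j\to\infty$ (so that the inclusion of weighted $\ell^2$ spaces indexed by $j$ is compact whenever the weight ratio $\lambda_j^{r-s}\to 0$).

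First I would take an arbitrary bounded sequence $\{f^{(n)}\}\subset D(\Lambda^s)$, and write $f^{(n)}=\sum_{j\ge1} f_j^{(n)}w_j$ with $f_j^{(n)}=\int_\Omega f^{(n)}w_j\,dx$. Since $\sum_j\lambda_j^s(f_j^{(n)})^2$ is uniformly bounded in $n$, each individual coefficient $f_j^{(n)}$ is bounded (uniformly in $n$). By a Cantor diagonal extraction, I would pass to a subsequence (still denoted $f^{(n)}$) such that $f_j^{(n)}\to f_j$ for every fixed $j$. Fatou's lemma applied to $\sum_j\lambda_j^s(f_j^{(n)})^2$ then gives $\sum_j\lambda_j^s f_j^2\le \liminf_n\|f^{(n)}\|_{D(\Lambda^s)}^2<\infty$, so the candidate limit $f:=\sum_j f_j w_j$ lies in $D(\Lambda^s)\subset D(\Lambda^r)$.

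Next I would show $f^{(n)}\to f$ in $D(\Lambda^r)$ by splitting
\[
\|f^{(n)}-f\|_{D(\Lambda^r)}^2=\sum_{j\le N}\lambda_j^r(f_j^{(n)}-f_j)^2+\sum_{j>N}\lambda_j^r(f_j^{(n)}-f_j)^2.
\]
Since $s>r$, the tail is estimated by
\[
\sum_{j>N}\lambda_j^r(f_j^{(n)}-f_j)^2\le \lambda_{N+1}^{r-s}\sum_{j>N}\lambda_j^s(f_j^{(n)}-f_j)^2\le C\lambda_{N+1}^{r-s},
\]
where $C$ bounds $\|f^{(n)}-f\|_{D(\Lambda^s)}^2$ (this bound is uniform in $n$ by the uniform $D(\Lambda^s)$ bound on $f^{(n)}$ and the membership $f\in D(\Lambda^s)$ established above). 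Because $\lambda_j\to\infty$ and $r-s<0$, we have $\lambda_{N+1}^{r-s}\to0$ as $N\to\infty$. Given $\varepsilon>0$ choose $N$ so the tail is below $\varepsilon/2$; then the finite sum $\sum_{j\le N}\lambda_j^r(f_j^{(n)}-f_j)^2$ tends to $0$ as $n\to\infty$ by the pointwise convergence of the coefficients, so it is below $\varepsilon/2$ for $n$ large, giving $\|f^{(n)}-f\|_{D(\Lambda^r)}\to 0$.

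The argument is essentially routine; there is no real obstacle once one observes that compactness here reduces to the elementary fact that the inclusion $\ell^2(\lambda_j^s)\hookrightarrow \ell^2(\lambda_j^r)$ is compact whenever $\lambda_j^{r-s}\to0$. The only point to be mildly careful about is covering negative values of $s$ (where $D(\Lambda^s)$ is defined distributionally via the coefficient sequence), but the proof above never uses the sign of $s$ or $r$, only the inequality $s>r$ and $\lambda_j\to\infty$, so it applies uniformly for all $s,r\in\mathbb{R}$.
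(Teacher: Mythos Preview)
Your proof is correct, but it follows a different route from the paper's. The paper reduces the statement to the classical Rellich--Kondrachov theorem: given a bounded sequence $\{u_n\}$ in $D(\Lambda^s)$, the sequence $\{\Lambda^r u_n\}$ is bounded in $D(\Lambda^{s-r})$; choosing $0<\delta<\min(s-r,\tfrac12)$ one has $D(\Lambda^{s-r})\subset D(\Lambda^\delta)=H^\delta(\Omega)\subset L^2(\Omega)$, the last inclusion being compact by Rellich, and then one pulls back with $\Lambda^{-r}$. Your argument instead works directly on the coefficient sequences, proving in effect that the inclusion $\ell^2(\lambda_j^s)\hookrightarrow\ell^2(\lambda_j^r)$ is compact because $\lambda_j^{r-s}\to0$. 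Your approach is more self-contained (no Sobolev identification or Rellich needed), while the paper's is shorter given those standard tools; both handle arbitrary real $s>r$ without difficulty. One cosmetic point: for negative $s$ the formula $f_j^{(n)}=\int_\Omega f^{(n)}w_j\,dx$ should be read as the duality pairing $\langle f^{(n)},w_j\rangle_{D(\Lambda^s),D(\Lambda^{-s})}$, but as you note this does not affect the argument.
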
 
\begin{proof}
Let $\{u_n\}$ be a bounded sequence in $D(\L^s)$. Then $\{\L^ru_n\}$ is bounded in $D(\L^{s-r})$. Choosing $\delta>0$ smaller than $\min(s-r, \mez)$ we have $D(\L^{s-r})\subset D(\L^\delta)=H^\delta(\Omega)\subset L^2(\Omega)$ where the first embedding is continuous and the second is compact. Consequently the embedding $D(\L^{s-r})\subset L^2(\Omega)$ is compact and thus there exist a subsequence $n_j$ and a function $f\in L^2(\Omega)$ such that $\L^ru_{n_j}$ converge to $f$ strongly in $L^2(\Omega)$. Then $u_{n_j}$ converge to $u:=\L^{-r}f$ strongly in $D(\L^r)$ and the proof is complete.
\end{proof}
A bound for the commutator between $\Lambda$ and multiplication by a smooth function was proved in \cite{ConIgn} using the method of harmonic extension:
\begin{theo}[\protect{Theorem 2, \cite{ConIgn}}]\label{Commutator:CI}
Let $\chi\in B(\Omega)$ with $B(\Omega)=W^{2, d}(\Omega)\cap W^{1, \infty}(\Omega)$ if $d\ge 3$, and $B(\Omega)=W^{2, p}(\Omega)$ with $p>2$ if $d=2$. There exists a constant $C(d, p, \Omega)$ such that
\[
\Vert [\Lambda, \chi]\psi\Vert_{D(\L^\mez)}\le C(d, p, \Omega)\Vert \chi\Vert_{B(\Omega)}\Vert \psi\Vert_{D(\L^\mez)}.
\] 
\end{theo}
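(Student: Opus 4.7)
The plan is to use the Dirichlet harmonic extension to the half-cylinder $\mathcal{U}:=\Omega\times(0,\infty)$. For $f$ vanishing on $\partial\Omega$ in the $D(\Lambda^{1/2})$ sense, let $f^*$ denote the bounded harmonic function on $\mathcal{U}$ with $f^*|_{y=0}=f$ and $f^*|_{\partial\Omega\times(0,\infty)}=0$. The spectral expansion $f^*(x,y)=\sum_{j}e^{-\sqrt{\lambda_j}\,y}f_j w_j(x)$ yields $\Lambda f=-\partial_y f^*|_{y=0}$ and the energy identity $\|f\|_{D(\Lambda^{1/2})}^2=\int_\mathcal{U}|\nabla_{x,y}f^*|^2\,dx\,dy$. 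Since $f^*$ minimizes the Dirichlet energy, $\|f\|_{D(\Lambda^{1/2})}^2\le\int_\mathcal{U}|\nabla V|^2\,dx\,dy$ for any $V\in H^1(\mathcal{U})$ with the same Dirichlet trace as $f^*$.

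Introduce the defect $W:=(\chi\psi)^*-\chi\,\psi^*$. It vanishes on all of $\partial\mathcal{U}$ and satisfies
\[
-\Delta_{x,y}W = 2\,\nabla\chi\cdot\nabla\psi^* + (\Delta\chi)\,\psi^* =: F\quad\text{in}~\mathcal{U}.
\]
Using that $\chi$ depends only on $x$, a direct calculation of normal derivatives at $y=0$ identifies
\[
[\Lambda,\chi]\psi = -\partial_y W|_{y=0}.
\]
Set $V:=-\partial_y W$. Because $W(x,y)=0$ for $x\in\partial\Omega$ and every $y$, the function $V$ also vanishes there, while $V|_{y=0}=[\Lambda,\chi]\psi$. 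Provided $W\in H^2(\mathcal{U})$ so that $V\in H^1(\mathcal{U})$, the minimization principle from the previous paragraph gives
\[
\|[\Lambda,\chi]\psi\|_{D(\Lambda^{1/2})}^2 \le \int_\mathcal{U}|\nabla\partial_y W|^2\,dx\,dy \le \|W\|_{H^2(\mathcal{U})}^2.
\]

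It remains to show $\|W\|_{H^2(\mathcal{U})}\lesssim\|F\|_{L^2(\mathcal{U})}$ and $\|F\|_{L^2(\mathcal{U})}\lesssim\|\chi\|_{B(\Omega)}\|\psi\|_{D(\Lambda^{1/2})}$. For the first, I would expand $W(x,y)=\sum_jW_j(y)w_j(x)$; each $W_j$ solves the one-dimensional Dirichlet problem $-W_j''+\lambda_jW_j=F_j$, for which an elementary energy estimate gives $\|W_j''\|_{L^2_y}^2+\lambda_j^2\|W_j\|_{L^2_y}^2\lesssim\|F_j\|_{L^2_y}^2$, and summing in $j$ recovers the full $H^2$ bound without any need to confront the dihedral edge $\partial\Omega\times\{0\}$. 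For the second, the gradient term is bounded by $\|\nabla\chi\|_{L^\infty}\|\nabla\psi^*\|_{L^2(\mathcal{U})}$, while the Laplacian term is estimated by H\"older in $x$ with exponent $d$ (or any $p>2$ when $d=2$) followed by the Sobolev embedding $H^1(\Omega)\hookrightarrow L^{2d/(d-2)}(\Omega)$. The spectral identities $\|\nabla\psi^*\|_{L^2(\mathcal{U})}^2=\|\psi\|_{D(\Lambda^{1/2})}^2$ and $\|\psi^*\|_{L^2(\mathcal{U})}^2=\tfrac12\|\psi\|_{D(\Lambda^{-1/2})}^2\lesssim\|\psi\|_{D(\Lambda^{1/2})}^2$ then convert extension norms back to $\|\psi\|_{D(\Lambda^{1/2})}$ and close the estimate. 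The step I anticipate to be most delicate is precisely this $H^2$ bound on the half-cylinder; the spectral reduction in $x$ is what makes it accessible without invoking edge-regularity theory at the corner along $\partial\Omega\times\{0\}$.
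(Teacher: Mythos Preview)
The paper does not give its own proof of this statement; it quotes the result as Theorem~2 of \cite{ConIgn} and only records that the proof there ``uses the method of harmonic extension.'' Your proposal reconstructs precisely that argument: the Caffarelli--Silvestre type extension $\psi^*$ on the half-cylinder $\Omega\times(0,\infty)$, the defect $W=(\chi\psi)^*-\chi\psi^*$ solving a Poisson problem with right-hand side $2\nabla\chi\cdot\nabla\psi^*+(\Delta\chi)\psi^*$, the identification $[\Lambda,\chi]\psi=-\partial_yW|_{y=0}$, and the variational characterisation of the $D(\Lambda^{1/2})$ norm to close the estimate. The steps you outline are correct; in particular the spectral reduction of the $H^2$ bound on $W$ to one-dimensional ODE estimates for each mode $W_j$ is exactly how one avoids corner-regularity issues at $\partial\Omega\times\{0\}$, and the H\"older/Sobolev bookkeeping on $F$ matches the definition of $B(\Omega)$. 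One minor remark: you do not actually need the full $H^2(\mathcal U)$ norm of $W$, only $\int_{\mathcal U}|\nabla_{x,y}\partial_yW|^2$, which is already $\sum_j(\lambda_j\|W_j'\|_{L^2_y}^2+\|W_j''\|_{L^2_y}^2)$ and follows directly from your ODE estimates without invoking elliptic regularity in $x$.
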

Pointwise estimates for the commutator between fractional Laplacian and differentiation were established in \cite{ConNgu}:
\begin{theo}[\protect{Theorem 2.2, \cite{ConNgu}}] For any $p\in [1, \infty]$ and $s\in (0, 2)$ there exists a positive constant $C(d, s, p, \Omega)$ such that for all $\psi\in C_c^\infty(\Omega)$ we have
\[
\la [\Lambda^s, \nabla]\psi(x)\ra\le C(d, s, p, \Omega)d(x)^{-s-1-\frac dp}\Vert \psi\Vert_{L^p(\Omega)}
\]
holds for all $x\in \Omega$.
\end{theo}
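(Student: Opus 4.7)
The plan is to represent $\Lambda^s$ through the Dirichlet heat semigroup and reduce the commutator $[\Lambda^s,\nabla]$ to a time integral of the simpler commutator $[\nabla,e^{t\Delta}]$. For $s\in(0,2)$ and $f$ smooth enough, the Balakrishnan formula gives
\[
\Lambda^s f(x)=c_s\int_0^\infty\bigl(f(x)-e^{t\Delta}f(x)\bigr)\frac{dt}{t^{1+s/2}},
\]
with an explicit positive constant $c_s$. Since $\psi\in C_c^\infty(\Omega)$ also has $\nabla\psi\in C_c^\infty(\Omega)$, applying this formula to both $\psi$ and $\nabla\psi$ and subtracting yields
\[
[\Lambda^s,\nabla]\psi(x)=c_s\int_0^\infty\bigl([\nabla,e^{t\Delta}]\psi\bigr)(x)\,\frac{dt}{t^{1+s/2}}.
\]

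Writing the inner commutator in terms of the Dirichlet heat kernel $p_D(t,x,y)$ and integrating by parts in $y$, using that $\psi$ vanishes on $\partial\Omega$ by compact support, produces the key identity
\[
[\nabla,e^{t\Delta}]\psi(x)=\int_\Omega\bigl(\nabla_x+\nabla_y\bigr)p_D(t,x,y)\,\psi(y)\,dy.
\]
For the free-space kernel $(\nabla_x+\nabla_y)p\equiv 0$, so $(\nabla_x+\nabla_y)p_D$ is purely a boundary correction. To quantify it I compare $p_D$ with the half-space Dirichlet kernel near $\partial\Omega$, where the reflection principle gives an explicit formula. A direct differentiation shows
\[
\bigl|(\nabla_x+\nabla_y)p_H(t,x,y)\bigr|\les \frac{d(x)+d(y)}{t^{(d+2)/2}}\,e^{-c(d(x)+d(y))^2/t}\,e^{-c|x-y|^2/t}.
\]
Standard parametrix or Green-function arguments for smooth $\partial\Omega$ then extend this bound, with harmless constants, to $(\nabla_x+\nabla_y)p_D$ on all of $\Omega$.

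Plugging this kernel estimate into H\"older's inequality with $1/p+1/p'=1$ and integrating the Gaussians in $y$ gives
\[
|[\nabla,e^{t\Delta}]\psi(x)|\les d(x)\,t^{-1-d/(2p)}\,e^{-cd(x)^2/t}\,\|\psi\|_{L^p(\Omega)}.
\]
Inserting this in the time integral and changing variables $u=d(x)^2/t$ then yields
\[
|[\Lambda^s,\nabla]\psi(x)|\les d(x)\int_0^\infty t^{-2-s/2-d/(2p)}e^{-cd(x)^2/t}\,dt\cdot\|\psi\|_{L^p(\Omega)}\les d(x)^{-s-1-d/p}\|\psi\|_{L^p(\Omega)},
\]
which is the claim. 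For $t$ beyond the diameter of $\Omega$ one replaces the Gaussian by the exponential decay $e^{-\lambda_1 t}$ coming from the Dirichlet spectral gap, and the same computation applies.

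The hard part of this scheme is the pointwise kernel estimate on $(\nabla_x+\nabla_y)p_D$: individual Gaussian upper bounds on $\nabla_x p_D$ and $\nabla_y p_D$ would each yield a divergent time integral, so one must genuinely exploit the \emph{cancellation} that is explicit in the free-space case and, via reflection, in the half-space case. Transferring the half-space estimate to a smooth bounded domain requires either a parametrix construction for $p_D$ near $\partial\Omega$ or direct Green-function gradient estimates of Gr\"uter--Widman type, and this is the only nontrivial input; the rest of the argument is bookkeeping.
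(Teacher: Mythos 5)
Your proposal is essentially the proof given in the cited source \cite{ConNgu} (the present paper only quotes the theorem without reproving it): there too $\Lambda^s$ is written via the Bochner--Balakrishnan formula over the Dirichlet heat semigroup, the commutator is reduced to the kernel $(\nabla_x+\nabla_y)H_D(t,x,y)$, and the exponent $-s-1-\frac dp$ emerges from exactly the H\"older-in-$y$ plus time-integral computation you carry out. The one step you defer --- the cancellation bound on $(\nabla_x+\nabla_y)H_D$ with the extra decay in $\sqrt{t}/d(x)$ for a general smooth bounded domain --- is precisely the heat kernel estimate of \cite{ConIgn} that the original argument also invokes as an external input, so your outline is faithful to the actual proof modulo that citation.
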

This pointwise bound implies the following commutator estimate in Lebesgue spaces.
\begin{theo}\label{Commutator:CN}
Let $p,~q\in [1, \infty]$, $s\in (0, 2)$ and $\varphi$ satisfy 
\[
\varphi(\cdot)d(\cdot)^{-s-1-\frac dp}\in L^q(\Omega).
\]
Then the operator $\varphi[\Lambda^s, \nabla]$ can be uniquely extended from $C_c^\infty(\Omega)$ to $L^p(\Omega)$ such that there exists a positive constant $C=C(d, s, p, \Omega)$ such that
\bq\label{commu:CN:e}
\Vert \varphi[\Lambda^s, \nabla] \psi\Vert_{L^q(\Omega)}\le C\Vert \varphi(\cdot)d(\cdot)^{-s-1-\frac dp}\Vert_{L^q(\Omega)}\Vert \psi\Vert_{L^p(\Omega)}
\eq
holds for all $\psi\in L^p(\Omega)$.
\end{theo}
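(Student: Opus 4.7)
The plan is to obtain the desired $L^q$ bound by directly integrating the pointwise commutator estimate from the preceding theorem. For any $\psi \in C_c^\infty(\Omega)$, that theorem gives
\[
|[\Lambda^s,\nabla]\psi(x)| \le C(d,s,p,\Omega)\, d(x)^{-s-1-\frac{d}{p}} \|\psi\|_{L^p(\Omega)}
\]
at every $x \in \Omega$. Multiplying through by $|\varphi(x)|$ and taking the $L^q$ norm in $x$ pulls the constant $\|\psi\|_{L^p}$ out of the integral and immediately yields the estimate \eqref{commu:CN:e} on the dense subspace $C_c^\infty(\Omega) \subset L^p(\Omega)$. So essentially no new analytic work is required beyond invoking the preceding theorem and Minkowski/H\"older in the trivial form.

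For $p \in [1,\infty)$, the extension is standard: since $C_c^\infty(\Omega)$ is dense in $L^p(\Omega)$ and the map $\psi \mapsto \varphi[\Lambda^s,\nabla]\psi$ is linear and bounded from $C_c^\infty(\Omega)$ (with the $L^p$ norm) into the Banach space $L^q(\Omega)$, it admits a unique bounded linear extension to all of $L^p(\Omega)$ satisfying the same inequality. For the endpoint $p=\infty$, $C_c^\infty(\Omega)$ is no longer norm dense in $L^\infty(\Omega)$, so I would proceed by approximation: given $\psi \in L^\infty(\Omega)$, choose $\psi_n \in C_c^\infty(\Omega)$ via cutoff and mollification with $\|\psi_n\|_{L^\infty} \le \|\psi\|_{L^\infty}$ and $\psi_n \to \psi$ a.e.\ and in every $L^r(\Omega)$, $r<\infty$. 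The pointwise bound with $p=\infty$ applied to each $\psi_n$ gives $|\varphi[\Lambda^s,\nabla]\psi_n(x)| \le C |\varphi(x)| d(x)^{-s-1}\|\psi\|_{L^\infty}$, a uniform $L^q$ bound; extracting a weak (or weak-$*$) $L^q$ limit defines $\varphi[\Lambda^s,\nabla]\psi$.

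The genuine work behind the theorem is entirely inside the pointwise bound that has already been established. The only real delicacy I expect is in the $p=\infty$ endpoint, where one must verify that the weak $L^q$ limit is intrinsic to $\psi$ and not an artifact of the approximating sequence. I would handle this by a duality identity obtained from integration against $\phi \in C_c^\infty(\Omega)$, or more cleanly by checking consistency with the $L^r$ extension already constructed for finite $r$ (using $L^\infty(\Omega) \hookrightarrow L^r(\Omega)$, since $\Omega$ is bounded): any two approximating sequences agree in the $L^r$-sense for each finite $r$, so their images agree after extension, forcing the weak $L^q$ limit to be unique. For $p<\infty$ the uniqueness is automatic from density plus boundedness.
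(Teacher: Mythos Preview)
Your proposal is correct and is exactly the route the paper has in mind: the paper offers no separate proof, merely prefacing the statement with ``This pointwise bound implies the following commutator estimate in Lebesgue spaces,'' and you have simply written out that implication together with the standard density extension.

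One small caveat on your $p=\infty$ discussion: of the two uniqueness arguments you sketch, the second (``consistency with the $L^r$ extension already constructed for finite $r$'') presupposes that such an extension exists, i.e.\ that $\varphi(\cdot)d(\cdot)^{-s-1-d/r}\in L^q(\Omega)$ for some finite $r$, and this is \emph{not} implied by the hypothesis $\varphi(\cdot)d(\cdot)^{-s-1}\in L^q(\Omega)$. Your first alternative, pairing against $\phi\in C_c^\infty(\Omega)$ and passing the commutator to the test function, is the one that actually works in general; since the paper only ever invokes the theorem with $p=2$ (in Lemma~\ref{commu:key} and the proof of Theorem~\ref{main}), this endpoint subtlety is in any case irrelevant to the applications.
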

The inequality \eqref{commu:CN:e} is remarkable because the commutator between an operator of order $s\in (0, 2)$ and an operator of order $1$ is an operator of order $0$.

%%%%%%%%%%%%%%
\section{Proof of Theorem \ref{globalweak}}\label{proof:globalweak}
We use Galarkin approximations. Denote by $\P_m$ the projection in $L^2(\Omega)$ onto the linear span $L^2_m$ of eigenfunctions $\{w_1,...,w_m\}$, i.e.
\bq\label{def:Pm}
\P_m f=\sum_{j=1}^mf_jw_j\quad\text{for}~f=\sum_{j=1}^\infty f_jw_j.
 \eq
The $m$th Galerkin approximation of \eqref{SQG} is the following ODE system in the finite dimensional space $ L^2_m$:
\bq\label{Galerkin}
\begin{cases}
\dot \tt_m+\P_m(u_m\cdot\nabla\tt_m)+\nu\L^s\theta_m=0&\quad t>0,\\
\tt_m=P_m\tt_0&\quad t=0,
\end{cases}
\eq
with $\tt_m(x, t)=\sum_{j=1}^m\tt_ j^{(m)} (t)w_j(x)$ and $u_m={R_D}^\perp\tt_m$  satisfying $\cnx u_m=0$. 
Note that \eqref{Galerkin} is equivalent to
\bq
\frac{d\theta^{(m)}_l}{dt} + \sum_{j,k=1}^m\gamma^{(m)}_{jkl}\theta^{(m)}_j\theta^{(m)}_{k} +\nu \lambda_l^{\frac s2}\theta^{(m)}_l=0,\quad l=1,2,...,m,
\label{galmode}
\eq
with
\[
\gamma^{(m)}_{jkl} = \lambda_j^{-\frac{1}{2}}\int_{\Omega}\left(\na^{\perp}w_j\cdot\na w_k\right)w_ldx.
\]
The local existence of $\theta_m$ on some time interval $[0, T_m]$  follows from the Cauchy-Lipschitz theorem. On the other hand, the antisymmetry property $\gamma^{(m)}_{jkl}=-\gamma^{(m)}_{jlk}$ yields
\bq\label{L^2bound}
\mez\Vert \tt_m(\cdot, t)\Vert^2_{L^2(\Omega)}+\nu\int_0^t\int_\Omega |\L^{\frac s2}\theta_m|^2dxdr=\mez\Vert \P_m\tt_0\Vert^2_{L^2(\Omega)}\le \mez \Vert \tt_0\Vert^2_{L^2(\Omega)}
\eq
for all $t\in [0, T_m]$. This implies that $\theta_m$ is global and \eqref{L^2bound} holds for all positive times. The sequence $\theta_m$ is thus uniformly bounded in $L^\infty(0, \infty; L^2(\Omega))\cap L^2(0, \infty; D(\L^{\frac s2}))$.  Upon extracting a subsequence, we have $\theta_m$ converge to some $\theta$ weakly-* in $L^\infty(0, \infty; L^2(\Omega))$ and weakly in $L^2(0, \infty; D(\L^{\frac s2}))$. In particular, $\tt$ obeys the same energy inequality as in \eqref{L^2bound}. On the other hand, if one multiplies \eqref{galmode} by $\lambda_l^{-1/2}\tt_l^{(m)}$ and uses the fact that $\gamma^{(m)}_{jkl}\lambda_l^{-1/2}=-\gamma^{(m)}_{lkj}\lambda_j^{-1/2}$, one obtains 
\bq\label{Ham:m}
\mez\Vert \tt_m(\cdot, t)\Vert^2_{D(\L^{-\mez})}+\nu\int_0^t\int_\Omega |\L^{\frac {s-1}2}\theta_m|^2dxdr=\mez\Vert \P_m\tt_0\Vert^2_{D(\L^{-\mez})}.
\eq
We derive next a uniform bound for $\p_t\tt_m$.  Let $N>0$ be an integer to be determined. For any $\varphi \in D(\L^{2N})$ we integrate by parts to get
\begin{align*}
 \int_\Omega \p_t\tt_m\varphi dx=-&\int_\Omega \P_m \cnx(u_m\tt_m) \varphi dx-\int_\Omega\nu\L^s\tt_m\varphi dx\\
 &=\int_\Omega  (u_m\tt_m) \cdot \nabla(\P_m\varphi)dx-\int_\Omega\nu\tt_m\L^s\phi dx.
 \end{align*}
The first term is controlled by
\[
\left|\int_\Omega  (u_m\tt_m) \cdot \nabla(\P_m\varphi)dx\right|\le \| u_m\tt_m\|_{L^1(\Omega)}\|\nabla\P_m\varphi\|_{L^\infty(\Omega)}\le C\| \P_m\varphi\|_{H^3(\Omega)}.
\]
According to  Lemma \ref{lemm:Pm}, for $N$ and $k$ satisfying $N>\frac{k}{2}+1$ there exists a positive constant $C_{N, k}$ such that
\bq
\| \P_m\varphi\|_{H^k(\Omega)}\le C_{N, k} \| \varphi\|_{D(\L^{2N})}\quad\forall m\ge 1,~\forall\varphi\in D(\L^{2N}).
\eq
 With $k=3$ and $N=3$ we have
\[
\left|\int_\Omega  (u_m\tt_m) \cdot \nabla(\P_m\varphi)dx\right|\le C\|\varphi\|_{D(\L^{6})}.
\]
On the other hand,
\[
\left| \int_\Omega\nu \tt_m\L^{s}\varphi dx\right|\le C\| \tt_m\|_{L^2(\Omega)}\| \varphi\|_{D(\L^2)}.
\]
We have proved that 
\[
\left| \int_\Omega \p_t\tt_m\varphi dx\right|\le C\| \varphi\|_{D(\L^6)}\quad\forall \varphi \in D(\L^6).
\]
Because $L^2(\Omega)\times D(\L^6)\ni (f, g)\mapsto \int_\Omega fgdx$ extends uniquely to a bilinear from on $D(\L^{-6})\times D(\L^6)$, we deduce that $\p_t\tt_m$ are uniformly bounded in $L^\infty(0, \infty; D(\L^{-6}))$. Note that we have used only the uniform regularity $L^\infty(0; \infty; L^2(\Omega))$ of $\tt_m$. We have the embeddings $D(\Lambda^{\frac s2})\subset D(\L^{(s-1)/2})\subset D(\L^{-6})$ where the first one is compact by virtue of Lemma \ref{lemm:compact}, and the second is continuous. Fix $T>0$. Aubin-Lions' lemma (see \cite{Lions}) ensures that for some function $f$ and along some subsequence $\tt_m$ converge to $f$ weakly in $L^2(0, T; D(\L^{\frac s2}))$ and strongly in $L^2(0, T; D(\L^{(s-1)/2}))$. In principle, both $f$ and the subsequence might depend on $T$, however, we already know that $\tt_m\to\tt$ weakly in $L^2(0, \infty; D(\L^{\frac s2}))$. Therefore, $f=\tt$ and the convergences to $\tt$ hold for the whole sequence. Similarly, applying Aubin-Lions' lemma with the embeddings $L^2(\Omega)\subset D(\L^{-\eps})\subset D(\L^{-6})$ for sufficiently small $\eps>0$ we obtain that $\tt_m\to \tt$ strongly in $C([0, T];  D(\L^{-\eps}))$. Integrating \eqref{Galerkin} against an arbitrary test function of the form $\phi(t)\varphi(x)$ with $\phi\in C_c^\infty((0, T))$, $\varphi\in D(\L^6)$ yields
\[
 \int_0^T\int_\Omega \theta_m  \varphi(x) dx\p_t\phi(t) dt+\int_0^T\int_\Omega u_m\theta_m \cdot \nabla\P_m\varphi(x)dx\phi(t) dt-\nu\int_0^T\int_\Omega \L^{\frac s2}\tt_m \L^{\frac s2}\varphi(x)dx\phi(t)dt=0.
\]
By Lemma \ref{lemm:Pm},
\[
\|(\mathbb{I}-\P_m)\varphi\|_{L^\infty(\Omega)}\le C\|(\mathbb{I}-\P_m)\varphi\|_{H^3(\Omega)}\to 0\quad\text{as}~m\to \infty.
\]
 The weak convergence of $\tt_m$ in $L^2(0, T; D(\L^{\frac s2}))$ allows one to pass to the limit in the two linear terms. The strong convergence of $\tt_m$ in $L^2(0, T; L^2(\Omega))$ together with the weak convergence of $u_m$ in the same space allows one to pass to the limit in the nonlinear term  and conclude that $\tt$ satisfies the weak formulation \eqref{weakform:dissipative } with $\varphi \in D(\L^6)$.  In fact, $\tt\in L^2(0, \infty; D(\L^{\frac s2}))\subset L^2(0, \infty; L^p(\Omega))$ for some $p>2$, hence $u\tt\in L^2(0, \infty; L^q(\Omega))$ for some $q>1$.  In addition, if $\varphi\in D(\L^2)$ then $\nabla \varphi\in L^r$ for all $r<\infty$, and thus the nonlinearity $\int_\Omega u\theta\cdot \nabla \varphi dx$ makes sense. Then because $D(\L^2)$ is dense in $D(\L^6)$, \eqref{weakform:dissipative }  holds for $\varphi \in D(\L^2)$. 
 
 We now pass to the limit in \eqref{Ham:m}. The strong convergence $\tt_m\to \tt$ in $C(0, T;  D(\L^{-\eps}))$ gives the convergence of the first term. On the other hand, the strong convergence $\tt_m\to \tt$ in $L^2(0, T; D(\L^{(s-1)/2}))$ yields the convergence of the second term. The right hand side converges to $\mez\|\tt_0\|_{D(\L^{-\mez})}^2$ since $\P_m\tt_0$ converge to $\tt_0$ in $L^2(\Omega)$. We thus obtain \eqref{Ham:viscous}.
 
 Since $\tt_m\to \tt$ in $C([0, T];  D(\L^{-\eps}))$ we deduce that
\[
\tt_0=\lim_{m\to \infty}\P_m\tt_0=\lim_{m\to \infty}\tt_m\vert_{t=0}=\tt\vert_{t=0}\quad\text{in}~  D(\L^{-\eps}).
\]
For a.e. $t\in [0, T]$, $\tt_m(t)$ are uniformly bounded in $L^2(\Omega)$, and thus along some subsequence $m_j$, a priori depending on $t$, we have $\tt_{m_j}(t)$ converge weakly to some $f(t)$ in $L^2(\Omega)$. But we know $\tt_m(t)\to \tt(t)$ in $D(\L^{-\eps})$. Thus, $f(t)=\tt(t)$ and $\tt_m(t)\wc \tt(t)$ in $L^2(\Omega)$ as a whole sequence for a.e. $t\in [0, T]$. %This combined with the strong continuity in time of $\tt_m$ yields the weak continuity in time of $\tt$. The proof is complete. 
Recall that $\frac{d}{dt}\tt_m$ are uniformly bounded in $L^\infty(0, T; D(\L^{-6}))$. For all $\varphi\in D(\L^6)$ and $t\in [0, T]$ we write 
\[
\langle \tt_m(t), \varphi\rangle_{L^2(\Omega), L^2(\Omega)}=\langle \tt_m(0), \varphi\rangle_{L^2(\Omega), L^2(\Omega)}+\int_0^t\langle \frac{d}{dt}\tt_m(r), \varphi\rangle_{D(\L^{-6}), D(\L^6)} dr.
\]
Because $\frac{d}{dt}\tt_m$ converge to $\frac{d}{dt}\tt$ weakly-* in $L^\infty(0, T; D(\L^{-6}))$, letting $m\to \infty$ yields
\[
\langle \tt(t), \varphi\rangle_{L^2(\Omega), L^2(\Omega)}=\langle \tt_0, \varphi\rangle_{L^2(\Omega), L^2(\Omega)}+\int_0^t\langle \frac{d}{dt}\tt(r), \varphi\rangle_{D(\L^{-6}), D(\L^6)} dr
\]
for a.e. $t\in [0, T]$. Taking the limit $t\to 0$ gives
\[
\lim_{t\to 0}\langle \tt(t), \varphi\rangle_{L^2(\Omega), L^2(\Omega)}=\langle \tt_0, \varphi\rangle_{L^2(\Omega), L^2(\Omega)}
\]
for all $\varphi\in D(\L^6)$. Finally, since $ D(\L^6)$ is dense in $L^2(\Omega)$ and $\tt\in L^\infty(0, T; L^2(\Omega))$ we conclude that $\tt\in C_w(0, T; L^2(\Omega))$ for all $T>0$. 
 \section{Proof of Theorem \ref{main}}\label{proof:main}
First, using  approximations and commutator estimates we justify the commutator structure of the SQG nonlinearity derived in \cite{ConNgu}.
\begin{lemm} \label{commu:key}
For all $\psi \in H^1_0(\Omega)$ and $\varphi\in C_c^{\infty}(\Omega)$ we have
\bq\label{key}
\int_{\Omega}\Lambda \psi\nabla^\perp\psi\cdot \nabla \varphi dx=\mez \int_\Omega  [\Lambda, \nabla^\perp]\psi\cdot \nabla\varphi\psi dx-\mez\int_\Omega  \nabla^\perp\psi\cdot [\Lambda, \nabla\varphi] \psi dx.
\eq
Here, the commutator $[\Lambda, \nabla^\perp]\psi\cdot \nabla\varphi$ is understood in the sense of the extended operator defined in Theorem \ref{Commutator:CN}.
\end{lemm}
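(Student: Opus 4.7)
The plan is to prove \eqref{key} first for $\psi\in C_c^\infty(\Omega)$ by direct computation, combining integration by parts with the self-adjointness of $\Lambda$ on $D(\Lambda)=H^1_0(\Omega)$, and then to extend to arbitrary $\psi\in H^1_0(\Omega)$ by density, using the commutator estimates recalled in Section \ref{prelim} to verify continuity of each side in $\psi$ (with $\varphi$ fixed).

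For $\psi\in C_c^\infty(\Omega)$, set $I:=\int_\Omega \Lambda\psi\,\nabla^\perp\psi\cdot\nabla\varphi\,dx$. Integrating by parts to move $\nabla^\perp$ off $\psi$, the boundary terms vanish by the compact support of $\psi$ and the cross term involving second derivatives of $\varphi$ drops because $\nabla^\perp\cdot\nabla\varphi=0$, giving $I=-\int_\Omega \psi\,\nabla^\perp\Lambda\psi\cdot\nabla\varphi\,dx$. Splitting $I=\mez I+\mez I$, applying this identity to one copy, and decomposing $\nabla^\perp\Lambda\psi=\Lambda\nabla^\perp\psi-[\Lambda,\nabla^\perp]\psi$ yields
$$\mez I=-\mez\int_\Omega \psi\,\Lambda\nabla^\perp\psi\cdot\nabla\varphi\,dx+\mez\int_\Omega [\Lambda,\nabla^\perp]\psi\cdot\nabla\varphi\,\psi\,dx.$$
Next, the self-adjointness of $\Lambda$ applied componentwise (legitimate since $\psi\nabla\varphi,\,\nabla^\perp\psi\in C_c^\infty(\Omega)\subset D(\Lambda)$) rewrites the middle integral as $\int_\Omega \nabla^\perp\psi\cdot \Lambda(\psi\nabla\varphi)\,dx$, and the splitting $\Lambda(\psi\nabla\varphi)=[\Lambda,\nabla\varphi]\psi+\nabla\varphi\,\Lambda\psi$ produces a copy of $\mez I$ that, moved to the left-hand side, leaves exactly \eqref{key}.

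For general $\psi\in H^1_0(\Omega)$, I would choose $\psi_n\in C_c^\infty(\Omega)$ with $\psi_n\to \psi$ in $H^1_0(\Omega)=D(\Lambda)$ (with equivalent norms) and pass to the limit. The left-hand side is a continuous bilinear form in $(\Lambda\psi,\nabla^\perp\psi)\in L^2\times L^2$ paired against the multiplier $\nabla\varphi\in L^\infty$. For the second term on the right, Theorem \ref{Commutator:CI} applied to $\chi=\partial_j\varphi\in C_c^\infty(\Omega)\subset B(\Omega)$ gives $\|[\Lambda,\partial_j\varphi]\psi\|_{L^2}\le C_\varphi\|\psi\|_{D(\Lambda^{\mez})}\le C_\varphi\|\psi\|_{H^1_0}$, so $\int_\Omega \nabla^\perp\psi\cdot[\Lambda,\nabla\varphi]\psi\,dx$ is bilinearly continuous on $H^1_0$. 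For the first term on the right, Theorem \ref{Commutator:CN} with $s=1$, $p=q=2$ and weight $\partial_j\varphi$---whose support is compactly contained in $\Omega$, so $\partial_j\varphi\,d(\cdot)^{-2-d/2}\in L^\infty\subset L^2$---provides the extended operator $\partial_j\varphi\,[\Lambda,\partial_k]\psi\in L^2(\Omega)$ with $\|\partial_j\varphi\,[\Lambda,\partial_k]\psi\|_{L^2}\le C_\varphi\|\psi\|_{L^2}$, so $\int_\Omega [\Lambda,\nabla^\perp]\psi\cdot\nabla\varphi\,\psi\,dx$ is bilinearly continuous on $L^2$. Letting $n\to\infty$ yields \eqref{key} for every $\psi\in H^1_0(\Omega)$.

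The main obstacle is precisely the first term on the right-hand side: $[\Lambda,\nabla^\perp]\psi$ is not a classically well-defined object on $H^1_0(\Omega)$ because of the severe $d(x)^{-s-1-d/p}$ boundary singularity in the pointwise commutator bound, and it only becomes meaningful when paired with a compactly supported smooth weight---here $\nabla\varphi$---through the extended operator of Theorem \ref{Commutator:CN}. The uniqueness of that extension is what ensures consistency with the classical manipulation in Step~1 along $\psi_n\to\psi$ and legitimizes the density argument.
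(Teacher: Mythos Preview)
Your proof is correct and follows essentially the same approach as the paper: establish \eqref{key} for $\psi\in C_c^\infty(\Omega)$ via integration by parts, the commutator decomposition, and self-adjointness of $\Lambda$, then pass to general $\psi\in H^1_0(\Omega)$ by density using precisely Theorems~\ref{Commutator:CI} and~\ref{Commutator:CN}. Your write-up is in fact more explicit than the paper's about why each term is continuous in $\psi$ and about the role of the extended operator in Theorem~\ref{Commutator:CN} for the $[\Lambda,\nabla^\perp]$ term.
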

\begin{proof}
Let $\psi_n\in C_c^\infty(\Omega)$ converging to $\psi $ in $H^1_0(\Omega)$. Integrating by parts and using the fact that $\na^{\perp}\cdot\na\varphi = 0$ gives
\[
\int_{\Omega}\Lambda \psi_n\nabla^\perp\psi_n\cdot \nabla \varphi dx  = -\int_{\Omega}\psi_n\nabla^\perp\L\psi_n\cdot \nabla \varphi dx,
\]
 Because $\psi_n$ is smooth and has compact support, $\nabla^\perp \psi_n\in D(\L)$, and thus we can commute $\nabla^\perp$ with $\L$ to obtain
\[
\begin{aligned}&
\int_{\Omega}\Lambda \psi_n\nabla^\perp\psi_n\cdot \nabla \varphi dx\\
&=-\int_\Omega   \psi_n [\nabla^\perp,\Lambda]\psi_n \cdot \nabla\varphi  dx-\int_\Omega   \psi_n\Lambda \nabla^\perp\psi_n\cdot \nabla\varphi dx\\
&=-\int_\Omega  \psi_n [\nabla^\perp,\Lambda]\psi_n\cdot \nabla\varphi dx-\int_\Omega  \nabla^\perp\psi_n\cdot \Lambda(\psi_n \nabla\varphi) dx\\
&=-\int_\Omega  [\nabla^\perp,\Lambda]\psi_n\cdot \nabla\varphi\psi_n dx-\int_\Omega  \nabla^\perp\psi_n\cdot [\Lambda, \nabla\varphi] \psi_n dx-\int_\Omega  \nabla^\perp\psi_n\cdot \nabla \varphi \Lambda\psi_n dx.
\end{aligned}
\]
Noticing that the last term on the right-hand side is exactly the negative of the left-hand side, we deduce that
\[
\int_{\Omega}\Lambda \psi_n\nabla^\perp\psi_n\cdot \nabla \varphi dx=\mez\int_\Omega  [\Lambda, \nabla^\perp]\psi_n\cdot \nabla\varphi\psi_n dx-\mez\int_\Omega  \nabla^\perp\psi_n\cdot [\Lambda, \nabla\varphi] \psi_n dx.
\]
The commutator estimates in Theorems \ref{Commutator:CI} and \ref{Commutator:CN} then allow us to pass to the limit in the preceding representation and conclude that \eqref{key} holds.
\end{proof}
Now let $\nu_n\to  0^+$ and let $\theta^{\nu_n}_0$ be a bounded sequence in $L^2(\Omega)$. For each $n$ let $\theta_n\equiv \theta^{\nu_n}$ be a  Leray-Hopf weak solution of \eqref{SQG} with viscosity $\nu_n$ and initial data $\theta^{\nu_n}_0$. In view of the energy inequality \eqref{energyineq}, $\tt_n$ are uniformly bounded in $L^\infty(0, \infty; L^2(\Omega))$
 and satisfies
\bq\label{weak:proof}
 \int_0^\infty\int_\Omega \theta_n\varphi(x)dx\p_t\phi(t)dt+\int_0^\infty\int_\Omega u_n\theta_n \cdot \nabla\varphi(x)dx \phi(t)dt-\nu_n\int_0^\infty\int_\Omega \L^{\frac s2}\tt_n \L^{\frac s2}\varphi(x)dx\phi(t) dt=0
 \eq
for all $\phi\in C_c^\infty((0, \infty))$ and $\varphi\in D(\L^2)$. Fix $T>0$. Assume that along a subsequence, still labeled by $n$, $\tt_n$ converge to $\tt$ weakly in $L^2(0, T; L^2(\Omega))$.  We prove that $\tt$ is a weak solution of the inviscid SQG equation. We first prove a uniform bound for $\p_t\tt_n$ provided only the uniform regularity $L^\infty(0, T; L^2(\Omega))$ of $\tt_n$. To this end, let us define for a.e. $t\in [0, T]$ the function $f_n(\cdot ,t)\in H^{-3}(\Omega)$ by 
\[
\langle f_n(t), \varphi\rangle_{H^{-3}(\Omega), H^3_0(\Omega)}:=\int_\Omega (u_n(x, t)\theta_n(x, t)\cdot \nabla\varphi(x)-\nu_n\tt_n(x, t)\L^s\varphi(x))dx
\]
for all $\varphi\in H^3_0(\Omega)\subset D(\L^2)$. Indeed,  we have
\begin{align*}
\left| \int_\Omega (u_n(x, t)\theta_n(x, t)\cdot \nabla\varphi(x)-\nu_n\tt_n(x, t)\L^s\varphi(x))dx\right|&\le C\big(\|\tt_n(t)\|_{L^2(\Omega)}^2+1\big)\| \varphi\|_{H^3(\Omega)}.\end{align*}
 This shows that $f_n$ are uniformly bounded in $ L^\infty(0, T; H^{-3}(\Omega))$. Then for any $\phi\in C_c^\infty((0, T))$, it follows from \eqref{weak:proof} that
\[
\int_0^T\tt_n \p_t\phi dt=-\int_0^T f_n\phi dt.
\]
In other words, $\p_t \tt_n=f_n$ and the desired uniform bound for $\p_t\tt_n$ follows. Fix $\eps\in (0, \mez)$. Aubin-Lions' lemma applied with the embeddings $L^2(\Omega)\subset D(\L^{-\eps})\subset H^{-3}(\Omega)$ then ensures that $\tt_n$ converge to $\tt$ strongly in $C(0, T; D(\L^{-\eps}))\subset C(0, T; H^{-1}(\Omega))$. Consequently $\psi_n$ converge to $\psi:=\L^{-1}\tt$ strongly in $C(0, T; L^2(\Omega))$. 

Now we take $\phi\in C_c^\infty((0, \infty))$ and $\varphi\in C^\infty_c(\Omega)$.  By virtue of Lemma \ref{commu:key}, the weak formulation \eqref{weakform:dissipative } gives
\begin{align*}
 &\int_0^T\int_\Omega \theta_n\varphi(x) dx \p_t\phi(t)dt+\mez\int_0^T\int_\Omega  [\Lambda, \nabla^\perp]\psi_n\cdot \nabla\varphi(x)\psi_n dx\phi(t) dt\\
 &\qquad -\mez\int_0^T\int_\Omega  \nabla^\perp\psi_n\cdot [\Lambda, \nabla\varphi(x)] \psi_ndx \phi(t) dt -\nu_n\int_0^T\int_\Omega \tt_n \L^s\varphi(x)dx \phi(t) dt=0,
 \end{align*}
where $\psi_n:=\L^{-1}\tt_n$ are uniformly bounded in $L^\infty(0, T; H^1_0(\Omega))$.  The weak convergence $\tt_n\wc \tt$ in $L^2(0, T; L^2(\Omega))$ readily yields
\[
\lim_{n\to \infty}\int_0^T\int_\Omega \theta_n \varphi(x)dx \p_t\phi(t) dt=\int_0^T\int_\Omega \theta \varphi(x)dx \p_t\phi(t) dt
\]
and 
\[
\lim_{n\to \infty}\nu_n\int_0^T\int_\Omega \tt_n \L^s\varphi(x) dx \phi(t) dt=0.
\]
Next we pass to the limit in the two nonlinear terms. Applying the commutator estimate in Theorem \ref{Commutator:CI}  we have
\begin{align*}
&\left|\int_0^T\int_\Omega  \nabla^\perp\psi_n\cdot [\Lambda, \nabla\varphi] \psi_n dx \phi dt-\int_0^T\int_\Omega  \nabla^\perp\psi\cdot [\Lambda, \nabla\varphi] \psi dx\phi dt \right|\\
&\le \left|\int_0^T\int_\Omega  \nabla^\perp(\psi_n-\psi)\cdot [\Lambda, \nabla\varphi] \psi dx \phi dt\right|+\|  \phi\nabla^\perp\psi_n\|_{L^2(0, T; L^2(\Omega))}\| [\Lambda, \nabla\varphi] (\psi_n-\psi)\|_{L^2(0, T; L^2(\Omega))}\\
&\le \left|\int_0^T\int_\Omega  \nabla^\perp(\psi_n-\psi)\cdot [\Lambda, \nabla\varphi] \psi dx \phi dt\right|+C\|\psi_n-\psi\|_{L^2(0, T; D(\L^{\mez}))}.
\end{align*}
The first term converges to $0$ due to the weak convergence of $\psi_n$ to $\psi$ in $L^2(0, T; H^1_0(\Omega))$ and the fact that $[\Lambda, \nabla\varphi] \psi\in D(\L^\mez)\subset L^2(\Omega)$ in view of Theorem \ref{Commutator:CI}. By interpolation, the second term is bounded by 
\[
\|\psi_n-\psi\|_{L^2(0, T; D(\L^{\mez}))}\le \|\psi_n-\psi\|_{L^2(0, T; L^2(\Omega))}^\mez\|\psi_n-\psi\|_{L^2(0, T; D(\L))}^\mez\le C\|\psi_n-\psi\|_{L^2(0, T; L^2(\Omega))}^\mez
\]
which also converge to $0$. Finally, we apply the commutator estimate in Theorem \ref{Commutator:CN} to obtain
\begin{align*}
&\left| \int_0^T\int_\Omega  [\Lambda, \nabla^\perp]\psi_n\cdot \nabla\varphi\psi_n dx \phi dt-\int_0^T\int_\Omega  [\Lambda, \nabla^\perp]\psi\cdot \nabla\varphi\psi dx \phi dt\right|\\
&\le \| \nabla\varphi[\Lambda, \nabla^\perp](\psi_n-\psi)\|_{L^2(0, T; L^2(\Omega))}\|\phi\psi_n\|_{L^2(0, T; L^2(\Omega))}\\
&\qquad+\|[\Lambda, \nabla^\perp]\psi\cdot \nabla \varphi\|_{L^2(0, T; L^2(\Omega))}\|\phi(\psi_n-\psi)\|_{L^2(0, T; L^2(\Omega))}\\
&\le C\| \psi_n-\psi\|_{L^2(0, T; L^2(\Omega))}
\end{align*}
which converge to $0$.  Putting together the above considerations leads to
\[
 \int_0^T\int_\Omega \theta \varphi(x)dx \p_t\phi (t) dt+\int_0^T\int_\Omega u\theta \cdot \nabla\varphi(x)dx \phi(t)dt=0,\quad\forall \phi\in C^\infty_c((0, T)),~\varphi\in C_c^\infty(\Omega).
 \]
 Therefore, $\tt$ is a weak solution of the inviscid SQG equation on $[0, T]$. Finally, consider $s\in (0, 1]$. We have the the balance \eqref{Ham:viscous} for each $\tt_n$. Since $s\le 1$ the uniform boundedness of  $\tt_n$ in $L^\infty(0, T; L^2(\Omega))$ implies 
 \[
\lim_{n\to \infty}\nu_n\int_0^t\int_\Omega |\L^{\frac {s-1}2}\theta_n|^2dxdr=0,\quad t\in [0, T].
 \]
 In addition, $\tt_n\to \tt$ strongly in  $C(0, T; D(\L^{-\eps}))\subset C(0, T; D(\L^{-\mez}))$. Letting $\nu=\nu_n\to 0$ in the balance \eqref{Ham:viscous} we conclude that the Hamiltonian of $\tt$ is constant on $[0, T]$. 
\appendix
\section{A bound on $\P_m$}\label{app}
Recall the definition \eqref{def:Pm} of $\P_m$. The following lemma is essentially taken from \cite{ConNgu}. We include the proof for the sake of completeness.
\begin{lemm}\label{lemm:Pm}
Let $\Omega\subset \Rr^d$, $d\ge 2$, be a bounded domain  with smooth boundary. For every $N$ and $k\in {\mathbb{N}}$ satisfying $N>\frac{k}{2}+\frac{d}{2}$ there exists a positive constant $C_{N, k}$ such that
\bq\label{boundPm}
\| \P_m\varphi\|_{H^k(\Omega)}\le C_{N, k}\| \varphi\|_{D(\L^{2N})}
\eq
for all $m\ge 1$ and $\phi\in D(\L^{2N})$; moreover, we have
\bq\label{conv:Pm}
\lim_{m\to \infty}\| (\mathbb{I}-\P_m)\varphi\|_{H^k(\Omega)}=0.
\eq
\end{lemm}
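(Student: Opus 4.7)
The plan is to reduce \eqref{boundPm} to an elementary spectral comparison by invoking the embedding $D(\L^k)\subset H^k(\Omega)$ from Lemma \ref{lemm:inject}, and then exploiting the fact that $\P_m$ acts diagonally on the eigenbasis $\{w_j\}$. Writing $\varphi=\sum_{j=1}^\infty\varphi_j w_j$ with $\varphi_j=\int_\Omega\varphi w_j\,dx$, the projection becomes $\P_m\varphi=\sum_{j\le m}\varphi_j w_j$, so by definition of the $D(\L^k)$ norm
\[
\|\P_m\varphi\|_{D(\L^k)}^2=\sum_{j\le m}\lambda_j^k\,\varphi_j^2.
\]
The hypothesis $N>k/2+d/2$ in particular forces $2N\ge k$, and since $\lambda_j\ge\lambda_1>0$ for every $j$ one has $\lambda_j^k\le \lambda_1^{k-2N}\lambda_j^{2N}$. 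Summing and then invoking Lemma \ref{lemm:inject} yields
\[
\|\P_m\varphi\|_{H^k(\Omega)}^2\le C_k\|\P_m\varphi\|_{D(\L^k)}^2\le C_k\lambda_1^{k-2N}\|\varphi\|_{D(\L^{2N})}^2,
\]
which is \eqref{boundPm}.

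For the convergence \eqref{conv:Pm}, the identical computation applied to $(\mathbb{I}-\P_m)\varphi=\sum_{j>m}\varphi_j w_j$ gives
\[
\|(\mathbb{I}-\P_m)\varphi\|_{H^k(\Omega)}^2\le C_{N,k}\sum_{j>m}\lambda_j^{2N}\,\varphi_j^2,
\]
and the right-hand side is the tail of the convergent series $\|\varphi\|_{D(\L^{2N})}^2$, hence it vanishes as $m\to\infty$.

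\textbf{Main obstacle.} Honestly, there is no substantive obstacle once Lemma \ref{lemm:inject} is in hand: the whole argument is the observation that $\P_m$ is a spectral truncation and that the $D(\L^s)$ scale is monotone in $s$ (up to the universal factor $\lambda_1^{k-2N}$). In fact the spectral route above works as soon as $2N\ge k$, so the margin $d/2$ in the stated hypothesis $N>k/2+d/2$ is more than the argument strictly needs; that extra slack would become essential only for an alternative proof proceeding through the Sobolev embedding $H^{k+d/2+\epsilon}(\Omega)\hookrightarrow C^k(\overline{\Omega})$ together with explicit sup-norm bounds on individual eigenfunctions, and is presumably stated in this form because it is what the applications of the lemma in Section \ref{proof:globalweak} directly require.
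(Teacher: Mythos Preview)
Your proof is correct and takes a genuinely different route from the paper's. The paper argues termwise: from $\varphi\in D(\L^{2N})$ it integrates by parts $N$ times to get $|\varphi_j|\le\lambda_j^{-N}\|\Delta^N\varphi\|_{L^2}$, couples this with the elliptic-regularity bound $\|w_j\|_{H^k}\le C_k\lambda_j^{k/2}$, and then sums $\sum_j\lambda_j^{-N+k/2}$ using Weyl's lower bound $\lambda_j\ge Cj^{2/d}$. Convergence of this series is exactly where the hypothesis $N>k/2+d/2$ enters, and the same tail sum gives \eqref{conv:Pm}. Your argument instead passes through Lemma~\ref{lemm:inject} as a black box, observes that $\P_m$ is a contraction in every $D(\L^s)$ norm, and compares scales via $\lambda_j\ge\lambda_1$; this is shorter and, as you note, only needs $2N\ge k$. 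What the paper's approach buys is that it is self-contained (it does not invoke Lemma~\ref{lemm:inject}, whose proof is outsourced to \cite{HN}) and it yields the slightly stronger absolute-convergence statement $\sum_j|\varphi_j|\|w_j\|_{H^k}<\infty$. Your speculation about the $d/2$ margin is essentially on target: it is precisely the summability condition coming from Weyl's law in the paper's termwise argument, not an artifact of the applications.
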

\begin{proof}
As $\varphi\in D(\L^{2N})$, we have $\Delta^\ell\varphi\in H^1_0(\Omega)$ for all $\ell=0, 1,\dots, N-1$. This allows repeated integration by parts with $w_j$ using the relation $-\Delta w_j=\lambda_j w_j$. Using H\"older's inequality and the fact that $w_j$ is normalized in $L^2$, we obtain
\[
|\varphi_j|\le \lambda_j^{-N}\|\Delta^N\varphi\|_{L^2},\quad \varphi_j=\int_\Omega \varphi w_j dx.
\]
By elliptic regularity estimates and induction, we have for all $k\in \mathbb N$ that
\[
\Vert w_j\Vert_{H^k(\Omega)}\le C_k\lambda_j^{\frac k2}.
\]
We know from the easy part of Weyl's asymptotic law that $\lambda_j \ge Cj^{\frac {2}{d}}$. Consequently, with  $N>\frac{k}{2}+\frac{d}{2}$ we deduce that
\begin{align*}
 \sum_{j=1}^\infty |\varphi_j|\| w_j\|_{H^k(\Omega)}&\le C_k\|\Delta^N\varphi\|_{L^2}\sum_{j=1}^\infty\lambda_j^{-N+\frac{k}{2}}\\
 &\le C_k\|\varphi\|_{D(\L^{2N})}\sum_{j=1}^\infty j^{(-N+\frac{k}{2})\frac{2}{d}}\\
 &=C_{N, k}\| \varphi\|_{D(\L^{2N})}
\end{align*}
where $C_{N, k}<\infty$ depends only on $N$ and $k$. Because 
\[
(\mathbb{I}-\P_m)\varphi=\sum_{j=m+1}^\infty \varphi_jw_j,
\]
 this proves both \eqref{boundPm} and \eqref{conv:Pm}. The proof is complete.
\end{proof}
{\bf{Acknowledgment.}} The research of PC was partially supported by NSF grant
DMS-1713985.

\end{document}